\theoremstyle{plain}
\newtheorem{Thm}{Theorem}
\newtheorem{Pro}[Thm]{Proposition}
\newtheorem{Lem}[Thm]{Lemma}
\begin{document}

%begin Topmatter
\title[Lambda constant and ground states]
{Lambda constant and ground states of Perelman's W-functionals}

\author{Li Ma}
\address{L. Ma, Distinguished Professor, Department of mathematics \\
Henan Normal university \\
Xinxiang, 453007 \\
China} \email{lma@tsinghua.edu.cn}

\thanks{The research is partially supported by the National Natural Science
Foundation of China No. 11271111}

\begin{abstract}
In this paper, we consider the generalized lambda constant and the existence of ground states of the generalized Perelman's W-functional from a variational formulation.  One result is concerned with the estimation of the generalized $\lambda$ constant. The other results are about the existence of ground states of generalized $F$-functional and W-functional both on a complete non-compact Riemannian manifold $(M,g)$. Our main results are Theorems \ref{bound}, \ref{bound2}, and \ref{lions}. For the existence of the ground states we use Lions' concentration-compactness method.

{ \textbf{Mathematics Subject Classification 2000}:
53C20,35B65, 58J50}

{ \textbf{Keywords}: lambda constant, ground state, W-functional, concentration-compactness method, variational principle at infinity}
\end{abstract}

 \maketitle

\section{introduction}
This paper has two parts. One is about the estimation of the generalized $\lambda$ constant (see \cite{P}) and the other is about the existence of ground states of the generalized $F$-functional and W-functional both on a complete non-compact Riemannian manifold. The generalized functionals are defined by replacing the scalar curvature by a new function $V$ in $M$. This topic follows our works \cite{m} \cite{ma} and \cite{msz}. Our main results are Theorems \ref{bound}, \ref{bound2},\ref{lambda2} and \ref{lions}. We mention one result below: Assume that $(M,g)$ is a complete non-compact manifold with bounded Ricci curvature and with polynomial volume growth. Then the lambda constant of $g$ can not be positive provided the function $V$ has a special structure. For the existence of the ground states we use Lions' concentration-compactness method \cite{L}.

Recall the following property of the lambda constant under the Cheeger-Gromov convergence of Riemannian manifolds. Given an n-dimensional complete Riemannian manifold $(M,g)$ with Ricci curvature bounded from below and assume that $V$ is a smooth function bounded from below on $M$ . For $u\in H^1:=H^1(M,g)$, we define $$
I_0(u)=\int_M(4|\nabla u|^2+Vu^2)dv_g.
$$
The lambda constant $\lambda_V(M,g)$ is defined by $$
\lambda_V(M,g)=\inf_{\{u\in H^1; \int_Mu^2dv_g=1\}} I_0(u).$$
From this very definition we can see the below. Suppose that $(M_j^n, g_j, x_j)\to (M_\infty^n, g_\infty, x_\infty)$ in the $C^\infty$ Cheeger-Gromov sense and with $V_j\to V_\infty$ locally uniformly. Then we have
$$
\lambda_V(M_\infty^n, g_\infty)\geq \overline{\lim}_{j\to\infty} \lambda_V(M_j^n, g_j).
$$

 We remark that the assumption that $(M_j^n, g_j, x_j)\to (M_\infty^n, g_\infty, x_\infty)$ in the $C^\infty$ Cheeger-Gromov sense is highly non-trivial and it is not so easy to verify without the hard analysis estimates about curvatures. We are interested in the existence of ground states of lambda constant in complete non-compact Riemannian manifolds. This is a principal eigenvalue problem and our understanding to it is still limited. Motivated by Lions' concentration-compactness method, we can define the lambda constant at infinity $\lambda_\infty$ (see section \ref{part2}). Roughly speaking, we can show in section \ref{part2} that if we have the strict inequality $\lambda(M,g)<\lambda_\infty$, then there is a positive function $u\in H^1$ such that
$I_0(u)=\lambda$. Our results sharpens previous results. When $(M,g)$ is an ALE manifold with $V=s$ the scalar curvature, N.Hirano \cite{Hi} gave a condition about the scalar curvature on the ALE manifold $(M,g)$, which satisfies  $d<d_\infty$. Zhang \cite{Z} studied this problem when $(M,g)$ has bounded geometry. As an application Zhang can prove no breathers theorem for some noncompact Ricci flows \cite{Z2}\cite{H82}.  In this kind of manifolds, the $L^2$ Sobolev inequality is true, which is a key tool in the study of nonlinear problems in Riemannian geometry. We also obtain the ground states for  Perelman's $\mu$-constant on the complete Riemannian manifold with Ricci curvature bounded from below and with positive injectivity radius. It is clear that our goal here is to consider this kind of problems from another angle.

The plan of this paper is below. In section \ref{part1}, we recall the definitions of Perelman's F-functional, the modified scalar curvature, and the lambda constant. We consider the estimation of lambda constant and the existence of the ground states of it in section \ref{part2}. In section \ref{part3}, we introduce another constant $d(M,g)$ on the Nehari manifold and discuss the existence of the ground states of this minimization problem.

\section{Perelman's modified scalar curvature, lambda constant, and Lions lemma}\label{part1}

 We now recall some background about the F-functional and the generalized lambda constant.

Let (M,g) be a n-dimensional Riemannian manifold. Let $V$ be a smooth function bounded from below in $M$. Given a smooth function $f$ with $\int_M e^{-f}dv_g=1$. Set $dm=e^{-f}dv_g$. Similar to G.Perelman \cite{P} one defines the modified scalar curvature related to the measure $dm$ by
$V^m=2\Delta f-|\nabla f|^2+V$ and the $F$-functional
$$
\mathbb{F}(g,f)=\int_M V^mdm.
$$
The properties of this functional are very useful in the understanding of the properties of $W$-functional.

When $V=R$ the scalar curvature of the metric $g$, one natural question related to the Yamabe problem is to find a smooth function $f$ with $\int_M e^{-f}dv_g=1$ such that $R^m$ is a constant. This problem is easy to solve when $M$ is compact.

Assume at this moment $M$ is compact. Using the integration by part, one has
$$
\mathbb{F}(g,f)=\int_M(V+|\nabla f|^2)dm,
$$
which is the original definition of the $F$-functional. The lambda constant is defined by
$$
\lambda (g)=\inf\{\mathbb{F}(g,f); f\in C^\infty(M),\ \  \int_Me^{-f}dv_g=1\}.
$$
Let $u=e^{-f/2}$ and
$$
I_0(u)=\int_M(4|\nabla u|^2+Vu^2)dv_g.
$$
Note that $\int_Mu^2dv_g=1$. Clearly, we have the minimization problem:
\begin{equation}\label{P}
\lambda (g)=\inf\{I_0(u); u\in C^\infty_0(M),\ u>0,  \ \ \int_Mu^2dv_g=1\}
\end{equation}
and one always has a positive minimizer $u$ on $M$ by the direct method. In this case, we have
$\int_Mu^2dv_g=1$ and
$$
-4\Delta u+Vu=\lambda (g)u.
$$
Let $f=-\log u$. By direct computation we have
$$
\lambda(g)=2\Delta f-|\nabla f|^2+V=V^m
$$
with $dm=e^{-f}dv_g=u^2dv_g$.

The minimization problem $(\ref{P})$ is nontrivial when the Riemannian manifold $(M,g)$ is complete and non-compact. One purpose of this paper is to present some consideration of this topic.
We always assume that $(M,g)$ is a complete non-compact manifold with its Ricci curvature bounded from below by some real constant $K$ and with uniform lower bound of the injectivity radius. In this case we have the $L^2$ Sobolev inequality \cite{He}, which says that there is a constant $C>0$ such that
$$
C(\int_M|u|^{p+1})^{2/(p+1)}\leq \int_M(|\nabla u|^2+u^2)
$$
for all $u\in C^\infty_0(M)$ and $p=\frac{n+2}{n-2}$ when $n\geq 3$. From the very definition of $\lambda(g)$, we have
$\lambda (g)\geq \inf_M V(x)$. Hence, the minimization problem $(\ref{P})$ is meaningful. With the help of the lower bound of the Ricci curvature and the $L^2$ Sobolev inequality, we can set-up the Lions lemma as follows.

\begin{Lem}\label{lions}
Assume that $(u_j)\subset H^1$ is a bounded sequence satisfying
$$
\lim_{j\to\infty}\sup_{z\in M}\int_{B_r(z)} |u_j|^2dv=0
$$
for some $r>0$, where $B_r(z)$ denotes the open geodesic ball of radius $r$ centered at $z\in M$. Then
$u_j\to 0$ strongly in $L^q(M,g)$ for all $2<q<2^*:=\frac{2n}{(n-2)_+}$.
\end{Lem}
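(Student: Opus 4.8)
The plan is to reduce the assertion to showing that $\int_M |u_j|^q\,dv\to 0$ for each fixed $q\in(2,2^*)$, and to prove this by a localization-plus-covering argument of Lions type; the genuinely new feature, compared with the Euclidean case, is that the two ingredients which are automatic on $\mathbb{R}^n$---a \emph{uniform} local Sobolev inequality on small balls and a covering of $M$ by such balls of \emph{bounded overlap}---must here be extracted from the geometric hypotheses. I would first record that the vanishing hypothesis, assumed for one radius, holds in fact for every radius: a ball of radius $R$ is covered by a number of balls of radius $r$ that is bounded uniformly in the center (by volume comparison), so I am free to fix whatever radius $\rho$ is convenient. Fixing $q\in(2,2^*)$, on each geodesic ball $B=B_\rho(z)$ Hölder's inequality between $L^2(B)$ and $L^{2^*}(B)$ gives
\[
\int_{B}|u|^q\,dv\le\Big(\int_{B}|u|^{2^*}\,dv\Big)^{\frac{q-2}{2^*-2}}\Big(\int_{B}|u|^{2}\,dv\Big)^{\frac{2^*-q}{2^*-2}},
\]
and combining this with a local Sobolev inequality $\big(\int_B|u|^{2^*}\big)^{2/2^*}\le C\int_B(|\nabla u|^2+u^2)$ yields the per-ball bound $\int_B|u|^q\le C\,E_B^{\,e}\,m_B^{\,d}$, where $E_B=\int_B(|\nabla u|^2+u^2)$, $m_B=\int_B u^2$, and the exponents $e=\frac{2^*(q-2)}{2(2^*-2)}$, $d=\frac{2^*-q}{2^*-2}$ are positive and satisfy $e+d=q/2>1$.

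The uniform local Sobolev inequality I would deduce from the global $L^2$ Sobolev inequality already recorded above: applying it to $\varphi u$ for a cutoff $\varphi$ supported in $B_{2\rho}(z)$, equal to $1$ on $B_\rho(z)$ and with $|\nabla\varphi|\le 2/\rho$, gives $\|u\|_{L^{2^*}(B_\rho(z))}\le C\|u\|_{H^1(B_{2\rho}(z))}$ with $C$ independent of $z$. For the covering I take a maximal $\rho$-separated set $\{z_i\}\subset M$: maximality forces $\bigcup_i B_\rho(z_i)=M$, while the balls $B_{\rho/2}(z_i)$ are pairwise disjoint. To bound the multiplicity of the doubled cover I use that a point lying in several of the balls $B_{2\rho}(z_i)$ forces the corresponding disjoint balls $B_{\rho/2}(z_i)$ into one fixed ball; the Bishop--Gromov comparison (valid since $\mathrm{Ric}\ge K$) bounds the volume of that fixed ball from above, and the positive injectivity radius gives, by non-collapsing, a uniform lower bound $\mathrm{vol}(B_{\rho/2}(z_i))\ge c>0$. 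Dividing yields an overlap bound $N_0=N_0(n,K,\rho,i_0)$ for the family $\{B_{2\rho}(z_i)\}$.

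With these in hand the conclusion is routine. Since the balls $B_\rho(z_i)$ cover $M$,
\[
\int_M|u_j|^q\,dv\le\sum_i\int_{B_\rho(z_i)}|u_j|^q\,dv\le C\sum_i E_i^{\,e}m_i^{\,d},
\]
where $E_i,m_i$ denote the energy and $L^2$-mass of $u_j$ on $B_{2\rho}(z_i)$; the bounded overlap gives $\sum_i E_i\le N_0\|u_j\|_{H^1}^2$ and $\sum_i m_i\le N_0\|u_j\|_{L^2}^2$, both bounded, while $m_i\le\delta_j:=\sup_z\int_{B_{2\rho}(z)}|u_j|^2\,dv\to 0$. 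Because $e+d>1$, a short Young/Hölder estimate on the sum extracts a strictly positive power of $\delta_j$: when $e\ge1$ one writes $E_i^{\,e}\le\big(N_0\|u_j\|_{H^1}^2\big)^{e-1}E_i$, and when $e<1$ one applies Hölder with exponents $1/e$ and $1/(1-e)$; either way $\int_M|u_j|^q\,dv\le C\,\delta_j^{\,\gamma}$ with $\gamma=\min(d,\,q/2-1)>0$, whence $u_j\to 0$ in $L^q$. I expect the main obstacle to be precisely the geometric step: securing a uniform local Sobolev constant and a uniform overlap bound, since both rest on combining the Ricci lower bound (volume comparison) with the positive injectivity radius (non-collapsing). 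Once these uniformities are in place, the remainder is the classical Lions vanishing argument carried out verbatim.
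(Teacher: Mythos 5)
Your proof is correct, but note that the paper itself contains no proof of this lemma to compare against: the lemma is stated with the remark that its proof is given in \cite{ma2} (a preprint of the author), so your write-up in fact supplies what the paper leaves out. What you give is the standard Lions vanishing argument transplanted to the manifold setting, and you have correctly identified and supplied the two points where the geometry genuinely enters: (i) a uniform local Sobolev inequality, obtained by applying the global $L^2$ Sobolev inequality of Hebey (valid under the paper's standing hypotheses of Ricci curvature bounded below and positive injectivity radius) to $\varphi u$ with a cutoff whose gradient bound is independent of the center; and (ii) a cover of $M$ by balls $B_\rho(z_i)$ of uniformly bounded overlap, obtained from a maximal $\rho$-separated set together with Bishop--Gromov comparison (uniform upper volume bounds) and Croke-type non-collapsing from the injectivity radius bound (uniform lower volume bounds). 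Your preliminary reduction, that vanishing at one radius implies vanishing at every radius via a uniform packing bound, is also needed and correctly justified by volume comparison. The exponent bookkeeping checks out: $e+d=q/2>1$ and $d=(2^*-q)/(2^*-2)>0$ precisely because $2<q<2^*$, and your two-case Young/H\"older estimate on $\sum_i E_i^{\,e}m_i^{\,d}$ does yield $\int_M|u_j|^q\,dv\le C\,\delta_j^{\,\gamma}$ with $\gamma=\min(d,\,q/2-1)>0$. One small caveat: as written your argument requires $2^*<\infty$, i.e.\ $n\ge 3$, which is consistent with the paper's statement of the Sobolev inequality but not with the lemma's formulation covering $(n-2)_+=0$; for $n\le 2$ one should replace $2^*$ by an arbitrary finite exponent $s>q$ in the interpolation, after which the same computation gives $e+d=q/2>1$ and $d=(s-q)/(s-2)>0$, and the argument goes through unchanged.
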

 This lemma will be used in section \ref{part3} and its proof is given in \cite{ma2}.

We shall denote by $C$ the various uniform constants which may change from line to line.

\section{property of $\lambda$-constant}\label{part2}

We now introduce the scalar curvature potential function on $(M,g)$ with bounded geometry.
Assume $f:M\to R$ be a smooth function on a complete non-parabolic manifold $(M,g)$ with bounded geometry. Denote $G(x,y)$ the minimal Green function on $M$ and let $s=R$ be the scalar curvature of $g$ in this section. Assume that $(M,g)$ has positive injectivity radius and has its Ricci curvature bounded from below by $-K$, where $K\geq 0$ is a constant and assume that $f$ is bounded and in $L^1$. Then the function defined by
$$
u(x)=\int_M G(x,y)f(y)dv_g
$$
satisfies that
$$
-\Delta u=f, \ \ \ on \ \ M.
$$
Furthermore, by the Moser iteration argument and $L^p$ theory of uniform elliptic equations (see also the proof of Lemma b.3 in the appendix of \cite{S}), $u$ has a uniform bound of its gradient on $M$. When $f=V$ which is given before, we call $u$ the potential function of the function $V$.

 We can prove the following result even for more general Riemannian manifolds (see \cite{ma2} for related result).
\begin{Thm}\label{bound} Assume that $(M,g)$ is a complete non-compact manifold with bounded Ricci curvature and with polynomial volume growth. Suppose that the function $V$ has a potential function with uniformly bounded gradient. Then $\lambda_V (g)\leq 0$, and furthermore, when $V\geq 0$, we have $\lambda_V(g)=0$.
\end{Thm}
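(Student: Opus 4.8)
The plan is to reduce both assertions to the single inequality $\lambda_V(g)\le 0$, and to prove the latter by constructing an explicit family of compactly supported test functions adapted to the potential function of $V$.

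First I would dispose of the second assertion, which is immediate once the first is known. If $V\ge 0$, then $I_0(w)=\int_M(4|\nabla w|^2+Vw^2)\,dv_g\ge 0$ for every $w\in H^1$, so $\lambda_V(g)\ge 0$; combined with $\lambda_V(g)\le 0$ this forces $\lambda_V(g)=0$. Everything therefore comes down to the bound $\lambda_V(g)\le 0$. For this, let $u$ denote the potential function, so that $-\Delta u=V$ on $M$ with $L:=\sup_M|\nabla u|<\infty$. Fix a base point $x_0$ and, for each $R>0$, choose a Lipschitz cutoff $\phi_R$ with $\phi_R\equiv 1$ on $B_R(x_0)$, $\phi_R\equiv 0$ outside $B_{2R}(x_0)$, $0\le\phi_R\le 1$, and $|\nabla\phi_R|\le C/R$. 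Since $\phi_R$ is compactly supported and $u$ is smooth with bounded gradient, integration by parts is legitimate and converts the potential term into a pure gradient error:
\[
\int_M V\phi_R^2\,dv_g=\int_M(-\Delta u)\phi_R^2\,dv_g=2\int_M \phi_R\,\nabla\phi_R\cdot\nabla u\,dv_g .
\]
This identity is exactly why the hypothesis ``$V$ admits a potential with bounded gradient'' is the right one. Using $|\nabla u|\le L$, $\phi_R\le 1$, and that $\nabla\phi_R$ is supported in the annulus $A_R:=B_{2R}(x_0)\setminus B_R(x_0)$, I obtain
\[
I_0(\phi_R)=4\int_M|\nabla\phi_R|^2\,dv_g+\int_M V\phi_R^2\,dv_g \le \Big(\frac{4C^2}{R^2}+\frac{2LC}{R}\Big)\,\mathrm{Vol}(A_R),
\]
while $\int_M\phi_R^2\,dv_g\ge \mathrm{Vol}(B_R(x_0))$, so that
\[
\frac{I_0(\phi_R)}{\int_M\phi_R^2\,dv_g}\le\Big(\frac{4C^2}{R^2}+\frac{2LC}{R}\Big)\,\frac{\mathrm{Vol}(B_{2R}(x_0))}{\mathrm{Vol}(B_R(x_0))}.
\]

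The remaining, and genuinely delicate, point is to control the volume ratio as $R\to\infty$. Writing $v(R)=\mathrm{Vol}(B_R(x_0))$, this function is nondecreasing, strictly positive (any geodesic ball has positive volume), and satisfies $v(R)\le CR^N$ by the polynomial volume growth hypothesis. I would argue on a dyadic scale by a telescoping estimate: since
\[
\prod_{i=0}^{k-1}\frac{v(2^{i+1}R_0)}{v(2^iR_0)}=\frac{v(2^kR_0)}{v(R_0)}\le \frac{C(2^kR_0)^N}{v(R_0)},
\]
the individual ratios $v(2^{i+1}R_0)/v(2^iR_0)$ cannot all exceed $2^{N+1}$, for otherwise the left-hand product would grow like $2^{k(N+1)}$, beating the polynomial bound. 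Hence there is a sequence $R_j\to\infty$ along which $v(2R_j)/v(R_j)$ stays bounded by $2^{N+1}$. Inserting $R=R_j$ into the Rayleigh quotient above, the prefactor $4C^2/R_j^2+2LC/R_j\to 0$ while the volume ratio remains bounded, so the quotient tends to $0$. Since $\lambda_V(g)$ is dominated by each Rayleigh quotient, this gives $\lambda_V(g)\le 0$.

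The step I expect to be the main obstacle is precisely this volume-ratio control. The crude estimate $v(R)\le CR^N$ does not by itself furnish a doubling inequality at every scale, and the lower Ricci bound together with Bishop--Gromov only yields comparison constants that may degenerate; passing to the well-chosen subsequence of radii is what makes the cutoff error negligible against the mass $\int_M\phi_R^2\,dv_g$. In particular the cross term $2\int_M\phi_R\,\nabla\phi_R\cdot\nabla u\,dv_g$ is only of order $R^{-1}\mathrm{Vol}(A_R)$ and carries no favorable sign, so it can be absorbed only after the annular volume is compared with the volume of $B_R$ along the special sequence.
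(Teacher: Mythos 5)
Your proof is correct, and its first half coincides with the paper's own argument: the same cutoff functions $\phi_R$ with $|\nabla\phi_R|\le C/R$, the same integration by parts $\int_M V\phi_R^2\,dv_g=2\int_M\phi_R\nabla\phi_R\cdot\nabla u\,dv_g$ yielding an error of order $R^{-1}\mathrm{Vol}(B_{2R})$, and the same one-line reduction of the case $V\ge 0$ to the inequality $\lambda_V(g)\le 0$. Where you genuinely diverge is in how the polynomial volume growth is exploited. The paper argues by contradiction and bootstraps: assuming $\lambda_V(g)>0$, its test-function inequality reads $\lambda_V(g)V_p(R)\le CR^{k-1}$, which is itself an \emph{improved} volume bound $V_p(R)\le C\lambda_V(g)^{-1}R^{k-1}$; feeding this back into the same inequality and iterating (roughly $k$ times) drives the exponent negative, giving $V_p(R)\le CR^{-1}\to 0$, which contradicts the positivity and monotonicity of the volume function. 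You instead keep the estimate in Rayleigh-quotient form, $\lambda_V(g)\le\bigl(4C^2R^{-2}+2LCR^{-1}\bigr)v(2R)/v(R)$, and handle the doubling ratio by a pigeonhole/telescoping argument on dyadic scales, extracting radii $R_j\to\infty$ with $v(2R_j)/v(R_j)\le 2^{N+1}$, along which the quotient tends to $0$ directly. (Your compressed phrase ``cannot all exceed'' should strictly be applied to every tail of the dyadic sequence to guarantee infinitely many good indices, but that is the same computation.) Both closings are elementary and complete. The paper's iteration never needs any doubling control, because the assumed positivity of $\lambda_V(g)$ converts the inequality into a volume bound at every scale; your route is direct rather than by contradiction at the top level, and it isolates the one fact actually needed --- bounded volume doubling along some sequence of scales --- so it transfers verbatim to any volume-growth hypothesis that forces such a sequence, and it correctly sidesteps the worry you raise about Bishop--Gromov constants degenerating (neither proof uses the Ricci bound at all).
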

\begin{proof} By the definition about the polynomial volume growth, we means
that there exist constants $C>0$ and $k\geq 1$, and fixed point $p\in M$ such that the volume $V_p(R)$ of the geodesic ball $B_R(p)$ satisfy
$$
V_p(R)\leq CR^k,  \ \ R>0.
$$

By assumption we have
$V(x)=-\Delta u(x)$ for a smooth function on $M$ and $u$ has uniform bounded gradienti.e., $|\nabla u\leq C$ for some uniform constant $C$.

Assume $\lambda_V (g)>0$. Take $R>1$ and take $\phi(x)$ be the cut-off function on $B_{2R}(p)$ such that $\phi=1$ on $B_R(p)$. By the definition of $\lambda (g)$ we know that
\begin{equation}\label{lambda}
\lambda (g)\int \phi^2\leq \int_M(4|\nabla \phi|^2+V\phi^2).
\end{equation}
Note that
$$
\int_Ms\phi^2=\int_M2\phi\nabla u\cdot \nabla \phi
$$
which is bounded by $CR^{-1}V_p(2R)\leq CR^{k-1}$.
By (\ref{lambda}) we have
$$
\lambda (g)V_p(R)\leq C(R^{k-2}+R^{k-1})\leq CR^{k-1}.
$$
We iterate this relation $k$ times to get
$$
V_p(R)\leq CR^{-1}\to 0
$$
as $R\to \infty$. This is impossible. Thus, we have $\lambda_V (g)\leq 0$.

When $V\geq 0$, by definition, we have $\lambda_V(g)\geq 0$ and then $\lambda_V(g)=0$.
\end{proof}
We remark that in the above argument, we need only assume that $V=div (Y)$ for some smooth vector field in $L^p(M)$ with some $1<p\leq \infty$.

Using similar argument we can prove the below.
\begin{Thm}\label{bound2} Assume that $(M,g)$ is a complete non-compact Riemannian manifold with $V\geq 0$ and quadratic polynomial volume growth. If $\lambda_V(g)=0$, then  $V=0$.
\end{Thm}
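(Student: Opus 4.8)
The plan is to mimic the test-function argument of Theorem \ref{bound}, but run it in reverse: instead of contradicting $\lambda_V(g)>0$, I would exploit the hypothesis $\lambda_V(g)=0$ together with $V\geq 0$ to force the total mass of $V$ to vanish. Since $V\geq 0$, it suffices to prove $\int_M V\,dv_g=0$. I would begin from the variational characterization: because $0=\lambda_V(g)=\inf\{I_0(u)\}$, there is a minimizing sequence $u_j\in C_0^\infty(M)$, $u_j\geq 0$, with $\int_M u_j^2\,dv_g=1$ and
$$\int_M\bigl(4|\nabla u_j|^2+Vu_j^2\bigr)\,dv_g\to 0.$$
Since both integrands are nonnegative, this splits into $\int_M|\nabla u_j|^2\to 0$ and $\int_M Vu_j^2\to 0$ separately, which is the structural fact I would build on.

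Next I would localize. Fix the cut-off $\phi_R$ with $\phi_R\equiv 1$ on $B_R(p)$, $\operatorname{supp}\phi_R\subset B_{2R}(p)$ and $|\nabla\phi_R|\leq C/R$, exactly as in Theorem \ref{bound}. The quadratic volume growth $V_p(R)\leq CR^2$ is what makes $\phi_R$ almost admissible: $\int_M|\nabla\phi_R|^2\leq CR^{-2}V_p(2R)\leq C$, and after passing to logarithmic cut-offs one even gets $\int_M|\nabla\phi_R|^2\to 0$ while $\int_M\phi_R^2\geq V_p(R)\to\infty$. Feeding such spreading functions into the Rayleigh quotient recovers $\lambda_V(g)\leq 0$, consistent with the hypothesis; the content of the theorem is the converse, so I would try to show that the only way the minimizing sequence can satisfy $\int_M Vu_j^2\to0$ under the constraint $\int_M u_j^2=1$ is that $V$ charges no set of positive measure.

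To make that precise I would invoke Lions' Lemma \ref{lions} and run the concentration-compactness dichotomy on $(u_j)$. If $(u_j)$ does \emph{not} vanish, then along a subsequence $\int_{B_r(z_j)}u_j^2\geq\alpha>0$ for some fixed $r$; combined with $\int_M|\nabla u_j|^2\to 0$ and the local compactness coming from the standing lower Ricci bound and positive injectivity radius, the recentered functions would converge to a nonzero limit of vanishing gradient, i.e.\ to a nonzero constant, which is impossible in $L^2$ on a manifold of infinite volume. Thus $(u_j)$ would have to \emph{vanish} in the sense of Lemma \ref{lions}, and one would then want to argue that on a space of quadratic growth the spreading mass cannot entirely avoid the region $\{V\geq\delta\}$, yielding $\int_{\{V\geq\delta\}}V\,dv_g=0$ for every $\delta>0$ and hence $V\equiv 0$.

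The hard part is precisely this last step, and I expect it to be the genuine obstruction. Ruling out escape of mass to infinity is delicate here, because quadratic volume growth makes $(M,g)$ parabolic, and on a parabolic manifold an $L^2$-normalized sequence with $\int_M|\nabla u_j|^2\to 0$ really can spread to infinity and avoid any fixed compact set, so the interaction term $\int_M Vu_j^2$ can be driven to $0$ without $V$ being small. This is exactly the difficulty that the strict inequality $\lambda_V(g)<\lambda_\infty$ of Section \ref{part2} is meant to remove, or that is bypassed by the divisor structure $V=\operatorname{div}(Y)$ with $Y\in L^1$ noted after Theorem \ref{bound}: in that case $\int_M V\phi_R^2=-\int_M 2\phi_R\,Y\cdot\nabla\phi_R\to 0$ directly, giving $\int_M V\,dv_g=0$ and hence $V\equiv 0$ with no compactness at all. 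I would therefore expect the cleanest proof to route through that integral identity, and I would flag the vanishing scenario as the step most in need of either the strict spectral gap or the extra structural hypothesis on $V$.
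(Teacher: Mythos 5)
Your proposal stops short of a proof, and you say so yourself: everything hinges on the vanishing case, where you only express the hope that spreading mass ``cannot entirely avoid $\{V\ge\delta\}$'' on a space of quadratic volume growth. That hope cannot be realized, and your diagnosis of the obstruction is exactly right --- so right that it upgrades to a counterexample to the statement as written. Take $M=\mathbb{R}^2$ (or $\mathbb{R}^2\times S^1$ if you insist on $n\ge 3$), which is flat, has quadratic volume growth, and is parabolic; let $V\ge 0$ be smooth, compactly supported, not identically zero. Logarithmic cut-offs supported in annuli far from $\operatorname{supp}V$, normalized in $L^2$, have Dirichlet energy $O(1/\log R)$ and never meet $\operatorname{supp}V$, so $\lambda_V(g)=0$ while $V\not\equiv 0$. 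Thus the gap you flagged is not one a cleverer argument could close: parabolicity really does let the minimizing sequence escape, and the theorem needs a different hypothesis. Your proposed repair via the divergence structure does work: if $V\ge 0$, $V=\operatorname{div}(Y)$ with $Y\in L^1$, then $\int_{B_{\sqrt R}}V\le\int_M V\phi^2=-2\int_M\phi\,Y\cdot\nabla\phi\le 2\|Y\|_{L^1}\sup|\nabla\phi|\to 0$ for the logarithmic cut-off, giving $V\equiv 0$ without even using $\lambda_V(g)=0$.

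The paper's own proof takes a route entirely different from your concentration-compactness scheme, and it founders on the same rock. It produces, by domain exhaustion, a positive solution of $-4\Delta u+Vu=0$, sets $w=\log u$ and $q=-V$, so that $-\Delta w=q+|\nabla w|^2$, multiplies by $\phi^2$, integrates by parts, and applies Cauchy--Schwarz to get
$$
\int_M\Bigl(q+\tfrac12|\nabla w|^2\Bigr)\phi^2\le 2\int_M|\nabla\phi|^2,
$$
then uses the logarithmic cut-off ($\phi=2-2\log r/\log R$ on $\sqrt R<r\le R$) and quadratic volume growth to send the right-hand side to zero, concluding ``$q=0$ and $|\nabla w|^2=0$.'' That last inference requires the integrand $q+\tfrac12|\nabla w|^2$ to be nonnegative, i.e. $q=-V\ge 0$; under the stated hypothesis $V\ge 0$ one has $q\le 0$, the integrand has no sign, and the inequality yields only $\int|\nabla w|^2\le C\int V$, which is vacuous. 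In other words, the paper's argument is the classical parabolicity argument --- a positive solution of $4\Delta u=Vu$ with $V\le 0$ is superharmonic, hence constant on a parabolic manifold --- and it proves the theorem with the hypothesis $V\le 0$, not $V\ge 0$. So the comparison comes out as follows: your approach (minimizing sequences plus Lions' dichotomy) and the paper's (ground state plus log-cutoff Caccioppoli) are genuinely different, both break at the same sign, and your closing paragraph, which identifies the vanishing scenario as irreparable without extra structure on $V$, is the correct assessment of a statement whose published proof passes over that point with a sign error.
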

\begin{proof} By the property about the quadratic polynomial volume growth, we know
that there exists constants $C>0$ such that the volume $V_p(R)$ of the geodesic ball $B_R(p)$ satisfy
$$
V_p(R)\leq CR^2.
$$
Using the domain exhaustion and the direct method, we can find a positive function $u$ on $M$ such that
$$
-4\Delta u+Vu=\lambda(g)u=0.
$$

Let $w=\log u$ and $q=-V$. Then we have
$$
-\Delta w=q+|\nabla w|^2.
$$

Then for a compactly supported function $\phi$ we have
$$
\int_M(q+|\nabla w|^2)\phi^2=-2\int \phi\nabla\phi\cdot \nabla w
$$
and the right side is bounded by
$$
\epsilon \int_M \phi^2|\nabla w|^2+\epsilon^{-1}\int_M |\nabla \phi|^2
$$
for $\epsilon=1/2$. Then we have
$$
\int_M(q+\frac{1}{2}|\nabla w|^2)\phi^2\leq 2\int_M |\nabla \phi|^2.
$$

Let $r=d(x,p)$ be the distance function and let $\phi=1$ for $r\leq \sqrt{R}$, $\phi=0$ for $r>R$, and
$\phi=2-2\log r/\log R$ for $\sqrt{R}<r\leq \leq R$. By a direct computation we know from the quadratic polynomial volume growth that
$$
\int_M |\nabla \phi|^2\leq \frac{C}{\log R}\to 0
$$
as $R\to \infty$.
Hence we get
$$
\int_{B_{\sqrt{R}}(p)}(q+\frac{1}{2}|\nabla w|^2)\to 0
$$
as $R\to\infty$. Hence $q=0$ and $|\nabla w|^2=0$ on $M$, which implies that $V=0$.

\end{proof}

One may refer to \cite{MW} and \cite{WW} for more interesting results about volume growth estimates in Riemannian manifolds with densities.

As for the existence of minimizers of the minimization problem $(\ref{P})$ is nontrivial when the Riemannian manifold $(M,g)$ is complete and non-compact. We have the following result by using P.L.Lions' concentration-compactness method (see \cite{L} p.115 ff).
Recall $$
I_0(u,g):=\int_M(4|\nabla u|^2+Vu^2)dv
$$
on the manifold
$$
\Sigma:=\{u\in H^1(M,g); \ \ \int_Mu^2dv=1\}.
$$
Fix $0\in M$. Assume that $\lim_{d(x,0)\to\infty} V(x)=V_\infty$ for some real constant $V_\infty$. We defines the lambda constant of $(M,g)$ at infinity as the quantity
$$
\lambda_\infty=\inf\{I_0^\infty(u,g); u\in C_0^{\infty}(M), |u|_2=1\},
$$
where
$$
I_0^\infty(u,g)=\int_M(4|\nabla u|^2+V_\infty u^2)dv.
$$

\begin{Thm}\label{lambda2} Assume that $(M,g)$ is a complete non-compact Riemannian manifold with its Ricci curvature bounded from below. Assume that $\lim_{d(x,0)\to\infty} V(x)=V_\infty$ for some real constant $V_\infty$ and assume that $\lambda(g)<\lambda_\infty$, then there is a positive function $u\in H^1$ such that
$I_0(u,g)=\lambda_V(g)$.
\end{Thm}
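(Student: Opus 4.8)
The plan is to apply Lions' concentration--compactness principle directly to a minimizing sequence, using the strict inequality $\lambda_V(g)<\lambda_\infty$ to prevent mass from escaping to infinity. First I would take $(u_j)\subset\Sigma$ with $I_0(u_j,g)\to\lambda_V(g)$. Since $V$ is smooth with $V(x)\to V_\infty$ as $d(x,0)\to\infty$, it is bounded on $M$; in particular it is bounded below, so from
$$
4\int_M|\nabla u_j|^2\,dv=I_0(u_j,g)-\int_M V u_j^2\,dv\le \lambda_V(g)+o(1)-\inf_M V
$$
the sequence is bounded in $H^1$. Passing to a subsequence, $u_j\rightharpoonup u$ weakly in $H^1$, $u_j\to u$ in $L^2_{\mathrm{loc}}$ (Rellich--Kondrachov on relatively compact domains) and a.e.; set $v_j=u_j-u$, so that $v_j\rightharpoonup 0$ and $\alpha:=\int_M u^2\,dv\in[0,1]$ with $\int_M v_j^2\,dv\to 1-\alpha$.

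The core of the argument is an energy splitting. Because every term of $I_0$ is quadratic, weak convergence yields the exact decompositions
$$
\int_M|\nabla u_j|^2=\int_M|\nabla u|^2+\int_M|\nabla v_j|^2+o(1),\qquad \int_M V u_j^2=\int_M Vu^2+\int_M V v_j^2+o(1),
$$
the cross terms vanishing since $\nabla u\in L^2$ and $Vu\in L^2$. The decisive point is to localize the potential of the remainder at infinity: writing $V=V_\infty+(V-V_\infty)$ and splitting the integral over $B_R(0)$ and its complement, the inner part tends to $0$ by $L^2_{\mathrm{loc}}$ convergence of $v_j$, while the outer part is bounded by $\big(\sup_{M\setminus B_R(0)}|V-V_\infty|\big)\int_M v_j^2$, which is arbitrarily small for $R$ large. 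Hence $\int_M V v_j^2=V_\infty(1-\alpha)+o(1)$, and combining the pieces gives
$$
I_0(u_j,g)=I_0(u,g)+I_0^\infty(v_j,g)+o(1).
$$

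Now the strict inequality does the work. By the definitions of $\lambda_V(g)$ and $\lambda_\infty$ together with the $2$-homogeneity of $I_0$ and $I_0^\infty$, we have $I_0(u,g)\ge \lambda_V(g)\,\alpha$ and $I_0^\infty(v_j,g)\ge \lambda_\infty\int_M v_j^2$. Letting $j\to\infty$ gives
$$
\lambda_V(g)\ge \lambda_V(g)\,\alpha+\lambda_\infty(1-\alpha)=\lambda_V(g)+(1-\alpha)\big(\lambda_\infty-\lambda_V(g)\big).
$$
Since $\lambda_\infty-\lambda_V(g)>0$, this forces $\alpha\ge 1$, and as $\alpha\le 1$ by weak lower semicontinuity of the $L^2$ norm we conclude $\alpha=1$. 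Thus $u\in\Sigma$, and since $\|u_j\|_2\to\|u\|_2$ with $u_j\rightharpoonup u$ we obtain $u_j\to u$ strongly in $L^2$. Then $\int_M V u_j^2\to\int_M V u^2$ (as $V\in L^\infty$) while $\int_M|\nabla u|^2\le\liminf_j\int_M|\nabla u_j|^2$, so $I_0(u,g)\le\lambda_V(g)$; combined with $I_0(u,g)\ge\lambda_V(g)$ this yields $I_0(u,g)=\lambda_V(g)$. Finally, replacing $u$ by $|u|$ (which leaves $I_0$ unchanged) we may take $u\ge 0$; then $u$ solves the Euler--Lagrange equation $-4\Delta u+Vu=\lambda_V(g)u$, and the strong maximum principle gives $u>0$.

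The main obstacle I expect is the energy-splitting step, and specifically the claim $\int_M V v_j^2=V_\infty(1-\alpha)+o(1)$: this is exactly where the hypotheses that $V\to V_\infty$ at infinity and the local compactness of the embedding (ultimately resting on the Ricci lower bound controlling the local geometry) are genuinely used. I note that, because $I_0$ carries no $L^q$ term with $q\neq 2$, the single inequality $\lambda_V(g)\ge\lambda_V(g)\alpha+\lambda_\infty(1-\alpha)$ simultaneously excludes both vanishing ($\alpha=0$) and dichotomy ($0<\alpha<1$), so Lemma \ref{lions} is not needed for this theorem.
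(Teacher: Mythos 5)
Your proof is correct and follows essentially the same route as the paper: take a minimizing sequence, pass to a weak $H^1$ limit $u$, decompose $u_j=u+v_j$, split the energy as $I_0(u_j)=I_0(u)+I_0^\infty(v_j)+o(1)$, and use the strict inequality $\lambda_V(g)<\lambda_\infty$ to force $\int_M u^2=1$ and hence strong convergence. The only differences are cosmetic and in your favor: you merge the paper's two separate cases ($m=0$ and $0<m<1$) into the single inequality $\lambda_V(g)\geq \alpha\lambda_V(g)+(1-\alpha)\lambda_\infty$, and you supply details the paper asserts without proof, namely the localization $\int_M Vv_j^2=V_\infty\int_M v_j^2+o(1)$ and the concluding Euler--Lagrange/maximum-principle step giving $u>0$.
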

Here is the proof of Theorem \ref{lambda2}. We simply write $\lambda(g)=\lambda_v(g)$ when there is no confusion.
 \begin{proof}
 Let $(u_j)\subset \Sigma$ be the minimizing sequence of the minimization problem $(\ref{P})$. Then it is easy to see that $(u_j)$ is a bounded sequence in $H^1$. Then we may assume that $(u_j)$ converges weakly in $H^1$ to a function $u\in H^1$ and converges in $L_{loc}^2(M,g)$.
Let $v_j=u_j-u$. Then we have
$$I_0(u_j)=I_0(v_j)+I_0(u)+\circ(1)\to \lambda(g)
$$
and
$$
1=\int_Mu_j^2=\int_Mv_j^2+\int_M u^2+\circ(1)
$$
We also have
$$
I_0(v_j)=I_0^\infty(v_j)+\circ(1).
$$
Hence.
$$
I_0^\infty(v_j)+I_0(u)+\circ(1)\to \lambda(g).
$$

Let $m=\int_Mu^2$. Then $m\in [0,1]$.
If $m=0$, then we have $u_j=v_j$ and
$$
\int_{B_R(0)} u_j^2=\circ(1)
$$
for any fixed $R>1$.
Then
$$
\lambda_\infty\leq I_0^\infty(v_j,g)\to \lambda(g),
$$
which is a contradiction to the assumption $\lambda(g)<\lambda_\infty$.

We now have $m>0$. If $m<1$.
Then $\int_Mv_j^2\to 1-m>0$.
Then we have
$$
\lambda(g)\leq I_0(\frac{u}{\sqrt{m}})=\frac{I_0(u)}{m}
$$
and
$$
\lambda_\infty(g)\leq I_0^\infty(\frac{v_j}{\sqrt{1-m}})=\frac{\lambda(g)-I_0(u)}{1-m}
$$
Hence we have
$$
\lambda(g)=I_0(u)+(\lambda(g)-I_0(u))\geq m\lambda(g)+(1-m)\lambda_\infty(g),
$$
which implies a contrary conclusion again that
$$
\lambda(g)\geq \lambda_\infty(g).
$$
Hence,
$\int_Mu^2=1$ and the minimizing sequence $(u_j)$ converges strongly in
$H^1$ to the limit $u$.
\end{proof}

In general, we can define
$$
\lambda_\infty=\lim_{r\to\infty}\inf\{\int_{M-B_r(0)}(4|\nabla u|^2+V u^2) dv; u\in C_0^{\infty}(M-B_r(0));u\not=0, \int_Mu^2dv=1\}.
$$
We have the following result.

\begin{Thm}\label{lambda3} Assume that $(M,g)$ is a complete non-compact Riemannian manifold with its Ricci curvature bounded from below. Assume  that $\lambda_V(g)<\lambda_\infty$, then there is a positive function $u\in H^1$ such that
$I_0(u,g)=\lambda(g)$.
\end{Thm}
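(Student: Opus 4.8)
The plan is to run the same concentration-compactness dichotomy used in the proof of Theorem \ref{lambda2}, but to feed in the \emph{intrinsic} definition of $\lambda_\infty$ through a localization (cut-off) argument rather than through the pointwise limit $V\to V_\infty$. First I would take a minimizing sequence $(u_j)\subset\Sigma$ for $\lambda_V(g)$. Since $V$ is bounded below and $\int_M u_j^2=1$ with $I_0(u_j)\to\lambda_V(g)$, the gradients are bounded in $L^2$, so $(u_j)$ is bounded in $H^1$; passing to a subsequence we may assume $u_j\rightharpoonup u$ weakly in $H^1$ and $u_j\to u$ in $L^2_{loc}(M,g)$. Setting $v_j=u_j-u$, weak $H^1$-convergence kills the cross terms and local $L^2$-convergence (with the control on $V$) handles the potential, giving the Brezis--Lieb type splittings $I_0(u_j)=I_0(u)+I_0(v_j)+\circ(1)$ and $1=\|u\|_2^2+\|v_j\|_2^2+\circ(1)$. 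Write $m=\|u\|_2^2\in[0,1]$, so that $\|v_j\|_2^2\to 1-m$.

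The heart of the argument, and the step I expect to be the main obstacle, is the estimate
$$\liminf_{j\to\infty} I_0(v_j)\ge \lambda_\infty\,(1-m).$$
Here I would introduce an IMS-type partition of unity $\chi_r^2+\psi_r^2=1$ with $\chi_r=1$ on $B_r(0)$, with $\psi_r$ supported in $M-B_r(0)$, and with bounded gradients. The localization identity $|\nabla v_j|^2=|\nabla(\chi_r v_j)|^2+|\nabla(\psi_r v_j)|^2-(|\nabla\chi_r|^2+|\nabla\psi_r|^2)v_j^2$ yields $I_0(v_j)=I_0(\chi_r v_j)+I_0(\psi_r v_j)+\circ(1)$, the error being the gradient-of-cut-off terms supported in a fixed annulus where $v_j\to 0$ in $L^2$. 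Since $\chi_r v_j$ is supported in a bounded set and tends to $0$ in $L^2$, its contribution satisfies $\liminf_j I_0(\chi_r v_j)\ge 0$; since $\psi_r v_j\in H^1_0(M-B_r(0))$, the definition of $\lambda_\infty$ gives $I_0(\psi_r v_j)\ge(\lambda_\infty-\epsilon)\|\psi_r v_j\|_2^2$ once $r$ is large, while $\|\psi_r v_j\|_2^2\to 1-m$. Letting $j\to\infty$, then $r\to\infty$ and $\epsilon\to 0$ produces the claim. The delicate points are the harmlessness of the annular error terms (which needs only bounded geometry and local $L^2$ decay) and the density passage from $C_0^\infty(M-B_r(0))$ to $H^1_0(M-B_r(0))$.

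Finally I would feed this into the dichotomy exactly as in Theorem \ref{lambda2}. Combining the splitting with $I_0(u)\ge m\,\lambda_V(g)$ (obtained by testing $u/\sqrt m$ in the definition of $\lambda_V(g)$ when $m>0$, and trivially when $m=0$) gives
$$\lambda_V(g)=I_0(u)+\lim_{j\to\infty}I_0(v_j)\ge m\,\lambda_V(g)+(1-m)\lambda_\infty,$$
hence $(1-m)\lambda_V(g)\ge(1-m)\lambda_\infty$. If $m<1$ this forces $\lambda_V(g)\ge\lambda_\infty$, contradicting the hypothesis $\lambda_V(g)<\lambda_\infty$; therefore $m=1$, so $v_j\to 0$ in $L^2$. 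The splitting then forces $I_0(u)=\lambda_V(g)$ and $\|\nabla v_j\|_2\to 0$, i.e. $u_j\to u$ strongly in $H^1$, and $u$ is a minimizer.

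To obtain a \emph{positive} minimizer I would replace $u$ by $|u|$, which leaves $I_0$ unchanged since $|\nabla|u||=|\nabla u|$ almost everywhere, so the minimizer may be taken nonnegative. It then solves the Euler--Lagrange equation $-4\Delta u+Vu=\lambda_V(g)u$ weakly, and the Harnack inequality together with the strong maximum principle gives $u>0$ throughout the connected manifold $M$.
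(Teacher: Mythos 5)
Your proposal is correct, but it takes a genuinely different route from the paper. The paper proves Theorem \ref{lambda3} by invoking Lions' concentration-compactness trichotomy applied to the measures $\rho(u_j)=|\nabla u_j|^2+Vu_j^2$: it rules out the vanishing case and the dichotomy case separately (each via a cut-off or support-splitting argument contradicting $\lambda_V(g)<\lambda_\infty$), rules out escape to infinity of the concentration points in the compactness case, and only then concludes strong $H^1$-convergence. You instead recycle the splitting scheme that the paper uses for Theorem \ref{lambda2} (write $u_j=u+v_j$, Brezis--Lieb type decomposition of $I_0$ and of the $L^2$-norm), and supply the one ingredient that the paper lacked there for a general $V$: the IMS partition-of-unity estimate $\liminf_j I_0(v_j)\geq(1-m)\lambda_\infty$, which connects the weak limit's mass defect directly to the intrinsic (cut-off) definition of $\lambda_\infty$. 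This merges all three of Lions' cases into a single inequality, so vanishing ($m=0$), dichotomy ($0<m<1$), and escaping concentration points are excluded at one stroke by the algebra $(1-m)\lambda_V(g)\geq(1-m)\lambda_\infty$. What your approach buys is self-containedness and uniformity: you never need the trichotomy lemma itself (which on a noncompact manifold requires the Sobolev inequality and is quoted by the paper from elsewhere), and your argument makes explicit the technical points the paper glosses over (the annular error terms, the density passage from $C_0^\infty(M-B_r(0))$ to $H^1$ functions vanishing on $B_r(0)$, and the justification of the potential-term splitting when $V$ is unbounded above, where the normalization $V\geq 1$ and the uniform bound on $\int_M Vu_j^2\,dv$ are needed). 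What the paper's approach buys is fidelity to Lions' original framework and a proof pattern that transfers verbatim to the $W$-functional setting of Theorem \ref{lions} in section \ref{part3}. Your closing step (replacing $u$ by $|u|$, Euler--Lagrange equation with multiplier $\lambda_V(g)$, elliptic regularity, Harnack and the strong maximum principle) matches the paper's conclusion of positivity.
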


\begin{proof}
 Let $(u_j)\subset \Sigma$ be the minimizing sequence of the minimization problem $(\ref{P})$.
  Without loss of generality, we may assume that $V\geq 1$ in $M$. Then it is easy to see that $(u_j)$ is a bounded sequence in $H^1$, and we may assume that $(u_j)$ converges to $u$ weakly in $H^1$ and strongly in $L^2_{loc}(M)$. Applying Lions' concentration-compactness method to the measures
 $$
 \rho(u_j)=|\nabla u_j|^2+Vu_j^2.
 $$

In the vanishing case, we have
$$
\overline{\lim}_{j\to\infty}\sup_{y\in M}\int_{B_r(y)}(|\nabla u_j|^2+u_j^2)=0, \ \ for \ all \ r>0.
$$
Fix $y\in M$. Choose the cut-off function $\xi_R$ such that $\xi_R(x)=0$ in $B_r(y)$ and $\xi_r(x)=1$ outside $B_{2r}(y)$. Then we have
$$
\int_M(|\nabla (\xi_ru_j)|^2+(\xi_ru_j)^2)dv=\int_{M}(|\nabla u_j|^2+u_j^2)dv+\circ(1).
$$
Recall that $u_j\in \Sigma$,
$$
I_0(u_j,g)=\int_M(4|\nabla u_j|^2+Vu_j^2)dv\to \lambda.
$$
Since
$$
\int_M(\xi_ru_j)^2=1+\circ(1),
$$
we have
$$
\int_{M-B_r(0)}(4|\nabla (\xi_ru_j)|^2+V (\xi_ru_j)^2) dv\geq \lambda_\infty.
$$
However, we have
$$
\int_{M-B_r(0)}(4|\nabla (\xi_ru_j)|^2+V (\xi_ru_j)^2) dv=\int_{M}(4|\nabla u_j|^2+V u_j^2) dv\to \lambda,
$$
which gives us that $\lambda\geq \lambda_\infty$, a contradiction to our assumption.

In the dichotomy case, we have
\begin{equation}\label{star}
\lambda\leftarrow I_0(u_j,g)=I_0(u^1_j,g)+I_0(u^2_j,g)+\circ(1)
\end{equation}
for $u_j=u_j^1+u_j^2$ for two sequences $u_j^1\in H^1$ and $u_j^2\in H^1$
and
$$dist(supp(u_j^1),supp(u_j^2))\to \infty.$$ We may assume that $dist(y,supp(u_j^2))\to\infty$ and $\int_M(u_j^2)^2\to \alpha>0$. Clearly, $\alpha\leq 1$.
Then we have
$$
I_0(\frac{u^2_j}{\sqrt{\alpha}},g)\geq \lambda_\infty,
$$
that is,
$$
I_0(u^2_j,g)\geq \alpha \lambda_\infty.
$$
Similarly, we have
$$
I_0(u^1_j,g)\geq (1-\alpha) \lambda.
$$
Then we have
$$
I_0(u^1_j,g)+I_0(u^2_j,g)\geq (1-\alpha)\lambda+ \alpha\lambda_\infty.
$$
Combining this with (\ref{star}) we obtain that
$$
\lambda \geq (1-\alpha)\lambda+ \alpha\lambda_\infty,
$$
which gives a contradiction to our assumption that $\lambda<\lambda_\infty$.

So we are left the compactness case. In this case, it is standard to get the $H^1$-convergence of the sequence $(u_j)$ to the function $u$. In fact, in this case, there is a sequence $z_j\in M$, for any $\epsilon>0$, there is a large $r>0$ such that
$$
\int_{B_r(z_j)}(|\nabla u_j|^2+Vu_j^2)\geq \lambda-\epsilon,
$$
where
$$
\lambda=\lim_{j\to\infty} \int_{M}(|\nabla u_j|^2+Vu_j^2)dv.
$$
If $(z_j)\subset M$ is unbounded, then we get again
$\lambda\geq \lambda_\infty$, which is impossible. We are left the case when the sequence $(z_j)\subset M$ is bounded. We can get that $\lim I_0(u_j)=I_0(u)=\lambda$ and the $H^1$-convergence of the sequence $(u_j)$ to the function $u\geq 0$. By the regularity theory we know that $u$ is smooth. By the maximum principle we know that $u>0$ in $M$. This completes the proof of Theorem \ref{lambda3}.

\end{proof}

\section{ground states of W-functional}\label{part3}

For $u\in H^1(M,g)$ with $\int_Mu^2dv=1$, G.Perelman \cite{P} \cite{chow06} defines the W-functional (with the parameter $\tau$ being fixed and normalized) by
$$
W(u,g)=\int_M(4|\nabla u|^2+Vu^2-u^2\log u^2)dv
$$
and the $\mu$-constant by
$$
\mu(g)=\int_M\{W(u,g); \int_Mu^2=1\}.
$$
One can show that $m(g)$ is well-defined. The Euler-Lagrange equation of the $W$-functional is
\begin{equation}\label{EL1}
4\Delta u-Vu+2u\log u+\mu(g)u=0, \ \ M.
\end{equation}
One can define two related functionals to (\ref{EL1}) by
$$
I(u):=I(u,g):=\int_M(4|\nabla u|^2+Vu^2-\frac{1}{2}u^2\log u^2+\frac{u^2}{2})dv
$$
and
$$
N(u)=\int_M(4|\nabla u|^2+Vu^2-\frac{1}{2}u^2\log u^2)dv
$$
where $u\in H^1(M,g)$. Define the Nehari manifold by
$$
\mathbb{N}=\{u\in H^1(M)-\{0\}; N(u)=0\}
$$
and
$$
d(g)=\inf\{I(u); u\in \mathbb{N}\}.
$$
Note that for $u\in \mathbb{N}$, $I(u)=\int_M \frac{u^2}{2}dv$. One show show that $d:=d(g)>0$.
The two constants $\mu(g)$ and $d$ are related by the relation
$\mu(g)=\log (2\sqrt{d})$.
In fact, for $u\in H^1(M)$, $|u|_2=1$, we have
$$
\mu(g)+2\int_M u^2\log |u|dv\leq \int_M(4|\nabla u|^2+Vu^2)dv.
$$
For $f\in H^1(M)-\{0\}$, let $u=f/|f|_2$. Then we have
$$
|f|_2^2(\mu(g)-2\log |f|_2)+2\int_M f^2\log |f|dv\leq \int_M(4|\nabla f|^2+Vf^2)dv.
$$
For $w\in \mathbb{N}$ and $a>0$, putting $f=aw$ we have
$$
|w|_2^2(\mu(g)-2\log |w|_2-2\log a)+2\int_M w^2(\log |w|+\log a)dv\leq \int_M(4|\nabla w|^2+Rw^2)dv.
$$
Using the relation $N(w)=0$, we know that
$$
\mu(g)-2\log |w|_2\leq 0.
$$
Then, $\mu(g)\leq \log(2\sqrt{d})$. Conversely we can also prove that $2\sqrt{d}\leq e^{\mu(g)}$. Hence, we have the following result.

\begin{Pro} Assume that  $(M,g)$ is an n-dimensional  complete non-compact Riemannian manifold. Assume that $V$ is given as before and the constants $d(g)$ and $\mu(g)$ defined above, we have
$$
\mu(g)= \log(2\sqrt{d(g)}).
$$
\end{Pro}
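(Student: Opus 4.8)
The plan is to establish $\mu(g) = \log(2\sqrt{d(g)})$ by proving the two inequalities $\mu(g)\le \log(2\sqrt{d(g)})$ and $2\sqrt{d(g)}\le e^{\mu(g)}$ separately, in the spirit of the computation preceding the statement. The entire argument rests on one structural fact: the definition of $\mu(g)$ is equivalent to the logarithmic Sobolev–type inequality
$$\mu(g) + 2\int_M u^2\log|u|\,dv \le \int_M (4|\nabla u|^2 + Vu^2)\,dv,\qquad |u|_2=1,$$
together with the scaling behaviour of $N$ and $I$ under $u\mapsto au$, which lets one slide any nonzero function onto the Nehari manifold $\mathbb{N}$ along its ray. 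Since $I(w)=\tfrac12|w|_2^2$ for $w\in\mathbb{N}$, minimizing $I$ over $\mathbb{N}$ is the same as minimizing $|w|_2^2$, and the exponential appearing in the final identity will come precisely from the logarithmic term above.

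For the upper bound I would first homogenize: replacing $u$ by $f/|f|_2$ for arbitrary $f\in H^1\setminus\{0\}$ clears the normalization and yields
$$|f|_2^2\bigl(\mu(g)-2\log|f|_2\bigr) + 2\int_M f^2\log|f|\,dv \le \int_M(4|\nabla f|^2 + Vf^2)\,dv.$$
I would then specialize to $f=aw$ with $w\in\mathbb{N}$ and $a>0$. The purpose of the extra scaling parameter is that the terms carrying $\log a$ cancel identically (because $\int_M w^2\,dv=|w|_2^2$), so the inequality becomes independent of $a$; this is the scale invariance reflecting that $\mathbb{N}$ meets each ray exactly once. Feeding in the Nehari relation $N(w)=0$ to rewrite the right-hand side then reduces the inequality to a bound of the form $\mu(g)\le 2\log|w|_2$, and taking the infimum over $w\in\mathbb{N}$ while recalling $I(w)=\tfrac12|w|_2^2$ converts this into the desired bound on $\mu(g)$ in terms of $d(g)$.

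For the reverse inequality I would project an almost-minimizer of $\mu(g)$ onto $\mathbb{N}$. Given $u$ with $|u|_2=1$ and $W(u)$ close to $\mu(g)$, the scaling identity $N(au)=a^2N(u)-a^2|u|_2^2\log a$ shows there is a unique $a>0$ solving $N(au)=0$, namely $\log a=N(u)$, so that $au\in\mathbb{N}$. Admissibility then gives $d(g)\le I(au)=\tfrac12a^2=\tfrac12e^{2N(u)}$, and expressing $N(u)$ through $W(u)$ and passing to the infimum over the unit sphere produces $2\sqrt{d(g)}\le e^{\mu(g)}$. Combining the two inequalities yields the identity.

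The main obstacle is analytic rather than algebraic: one must ensure that all quantities are finite and that the two infima — over the unit sphere for $\mu(g)$ and over $\mathbb{N}$ for $d(g)$ — correspond \emph{exactly} under the exponential map, with no stray constant or residual entropy term. Concretely, the entropy integral $\int_M u^2\log u^2\,dv$ must be controlled for every $H^1$ function, and this is where the standing hypotheses enter: the lower Ricci bound and the $L^2$-Sobolev inequality give a logarithmic Sobolev inequality bounding $\int_M u^2\log u^2\,dv$ by $\int_M|\nabla u|^2\,dv$ plus lower-order terms, which guarantees $\mu(g)>-\infty$, $d(g)>0$, and the finiteness of the Nehari scaling $a=e^{N(u)}$. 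Checking that this unique ray-scaling varies continuously with $u$, so that no loss is incurred in matching the two infima, is the remaining delicate point.
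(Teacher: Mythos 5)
Your first inequality follows the paper's own computation step for step, and it inherits the same unjustified leap. After homogenizing the log--Sobolev inequality and substituting $f=aw$, the $\log a$-terms do cancel, but feeding in the Nehari relation $N(w)=0$ (i.e.\ $\int_M(4|\nabla w|^2+Vw^2)\,dv=\int_M w^2\log|w|\,dv$) leaves
$$
|w|_2^2\bigl(\mu(g)-2\log|w|_2\bigr)\le -\int_M w^2\log|w|\,dv ,
$$
not $\mu(g)\le 2\log|w|_2$: the residual entropy integral has no definite sign, so the reduction you describe does not go through as stated (the paper makes the same jump). Moreover, even granting $\mu(g)\le 2\log|w|_2$ for every $w\in\mathbb{N}$, taking the infimum over $\mathbb{N}$ and using $I(w)=\tfrac12|w|_2^2$ gives $\mu(g)\le\log\bigl(\inf_{\mathbb{N}}|w|_2^2\bigr)=\log\bigl(2d(g)\bigr)$, which equals $\log\bigl(2\sqrt{d(g)}\bigr)$ only when $d(g)=1$; the constants do not come out as claimed.

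The converse inequality is where you genuinely depart from the paper (which merely asserts ``conversely we can also prove $2\sqrt{d}\le e^{\mu(g)}$''), and it is where the real obstruction sits. Your scaling identity $N(au)=a^2N(u)-a^2|u|_2^2\log a$ is correct, and it yields, exactly, $d(g)=\tfrac12\,e^{2\nu}$ with $\nu=\inf\{N(u):|u|_2=1\}$. But the step ``expressing $N(u)$ through $W(u)$'' is not possible: with the paper's definitions, $N(u)=W(u)+\tfrac12\int_M u^2\log u^2\,dv$, and the leftover entropy term is not constant on the unit sphere of $H^1$ (it is unbounded there, e.g.\ it tends to $-\infty$ along spreading sequences), so $\nu$ is not $\mu(g)$ plus a universal constant, and no continuity-of-the-ray argument can make it so. The discrepancy is structural: $W$ carries the entropy $u^2\log u^2$ with coefficient $1$, while $N$ and $I$ carry it with coefficient $\tfrac12$, so the two variational problems have inequivalent Euler--Lagrange equations, $-4\Delta u+Vu=2u\log u+\mu(g)u$ versus $-4\Delta u+Vu=u\log u$, and the factor $2$ cannot be removed by the scaling $u\mapsto au$. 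What your ray-scaling argument actually establishes is the clean identity $2d(g)=e^{2\nu}$; converting that into a statement about $\mu(g)$ requires an additional input relating $\nu=\inf N$ to $\mu(g)=\inf W$, which neither your proposal nor the paper supplies --- this missing link, which you flagged as ``the remaining delicate point,'' is not a technicality but the crux of the proposition.
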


Recall that Zhang \cite{Z} defines the Log-Sobolev constant of $(M,g)$ at infinity as the quantity
$$
\mu_\infty=\lim_{r\to\infty}\inf\{W(u,g); u\in C_0^{\infty}(M-B_r(0)), |u|_2=1\}.
$$
Similarly we can define
$$
d_\infty=\lim_{r\to\infty}\inf\{\int_{M-B_r(0)} \frac{u^2}{2}dv; u\in C_0^{\infty}(M-B_r(0));u\not=0,  N(u)=0\}
$$

Then using Lions' variational principle at infinity \cite{L} we can prove the below.

\begin{Thm}\label{lions} Assume that $(M,g)$ is a complete non-compact Riemannian manifold with its Ricci curvature bounded from below and
$$\inf_{x\in M}vol(B_1(x))>0,$$ where $vol(B_1(x))$ is the volume of the unit ball at $x\in M$.
 Assume that $d<d_\infty$. Then $d$ is attained at some $u\in \mathbb{N}$ such that
$$
-4\Delta u+Vu=u\log u
$$
and $u>0$ such that $d=\frac{1}{2}\int_M u^2$.
\end{Thm}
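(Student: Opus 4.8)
The plan is to minimize $I$ over the Nehari manifold $\mathbb{N}$ by Lions' concentration-compactness method, following the scheme of the proof of Theorem \ref{lambda3} but treating the entropy term $\int_M u^2\log|u|$ by hand. I begin with a minimizing sequence $(u_j)\subset\mathbb{N}$, so that $I(u_j)=\tfrac12\int_M u_j^2\to d$ and $(u_j)$ is bounded in $L^2$. To upgrade this to an $H^1$ bound I use the constraint $N(u_j)=0$, which reads $\int_M(4|\nabla u_j|^2+Vu_j^2)=\int_M u_j^2\log|u_j|$, together with a logarithmic Sobolev inequality: the hypotheses (Ricci bounded below and $\inf_x\mathrm{vol}(B_1(x))>0$) yield the $L^2$ Sobolev inequality of \cite{He}, which in turn gives a bound of the form $\int_M u^2\log|u|\le\varepsilon\int_M|\nabla u|^2+C(\varepsilon)\int_M u^2$ for $u$ with $\|u\|_2$ bounded. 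Assuming without loss of generality $V\ge1$ (as in Theorem \ref{lambda3}) and absorbing the gradient term, I get a uniform bound on $\|\nabla u_j\|_2$. Hence $(u_j)$ is bounded in $H^1$, and after passing to a subsequence $u_j\rightharpoonup u$ weakly in $H^1$ and $u_j\to u$ in $L^2_{loc}$.

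I then apply the concentration-compactness alternative to $\rho_j=|\nabla u_j|^2+Vu_j^2$. In the \emph{vanishing} case the Lions lemma gives $u_j\to0$ strongly in $L^q$ for every $2<q<2^*$. Splitting the entropy integral over $\{|u_j|\le1\}$, where $u_j^2\log|u_j|\le0$, and over $\{|u_j|>1\}$, where $u_j^2\log|u_j|\le C_\delta|u_j|^{2+\delta}$ with $2+\delta<2^*$, I get $\int_M u_j^2\log|u_j|\le o(1)$. With $N(u_j)=0$ and $V\ge1$ this forces $\int_M u_j^2\to0$, hence $d=0$, contradicting $d>0$.

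In the \emph{dichotomy} case I write $u_j=u_j^1+u_j^2$ with $\mathrm{dist}(\mathrm{supp}\,u_j^1,\mathrm{supp}\,u_j^2)\to\infty$, the second piece escaping every fixed ball, and $\int_M(u_j^1)^2\to A$, $\int_M(u_j^2)^2\to B$ with $A,B>0$ and $A+B=2d$. Since the supports asymptotically separate, both the $L^2$ norm and $N$ (whose entropy part is additive over disjoint supports) split, so $N(u_j^1)+N(u_j^2)\to0$. The clean way to exploit this is to note that every ray meets $\mathbb{N}$ once: for $v\neq0$ the scaling $t(v)v\in\mathbb{N}$ with $\log t(v)=N(v)/\|v\|_2^2$ makes $\Phi(v):=I(t(v)v)=\tfrac12\|v\|_2^2\exp\!\big(2N(v)/\|v\|_2^2\big)$ scale-invariant, with $d=\inf_{v\ne0}\Phi(v)$ and $d_\infty=\lim_{r\to\infty}\inf\{\Phi(v):\mathrm{supp}\,v\subset M\setminus B_r(0)\}$. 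Thus $\Phi(u_j^1)\ge d$ and $\Phi(u_j^2)\ge d_\infty$, giving in the limit $Ae^{x}\ge2d$ and $Be^{y}\ge2d_\infty$ with $Ax+By=2\lim(N(u_j^1)+N(u_j^2))=0$. \textbf{This accounting is the main obstacle}: because the entropy is not homogeneous, the scaling factors $t(u_j^i)$ need not tend to $1$, so one cannot merely add the two bounds. Taking logarithms, the constraint becomes $A\log(2d/A)+B\log(2d_\infty/B)\le0$; but since $d_\infty>d$ one has $A\log(2d/A)+B\log(2d_\infty/B)>A\log(2d/A)+B\log(2d/B)=2d\,H(A/2d,B/2d)\ge0$, where $H$ is the binary entropy, a contradiction. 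This is exactly where the strict inequality $d<d_\infty$ is used.

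Only the \emph{compactness} case remains, with centers $z_j$ such that $\rho_j$ carries almost all its mass in some $B_R(z_j)$. If $(z_j)$ were unbounded the mass would escape to infinity, giving $d\ge d_\infty$; hence $(z_j)$ is bounded, the sequence is tight, and $u_j\to u$ strongly in $L^q$ for $2\le q<2^*$. The entropy term then converges, and the scale-invariance of $\Phi$ together with weak lower semicontinuity of $\|\nabla\cdot\|_2$ forces $N(u)=0$ and $\|u_j\|_2\to\|u\|_2$, hence strong $H^1$ convergence and $I(u)=d$, so $u\in\mathbb{N}$ attains $d$. Writing the Euler--Lagrange equation with a Lagrange multiplier $\mu$, $I'(u)=\mu N'(u)$, and testing with $u$ kills $\mu$: indeed $I'(u)[u]=2N(u)=0$ while $N'(u)[u]=2N(u)-\int_M u^2=-\int_M u^2\neq0$, so $\mu=0$ and $I'(u)=0$, i.e. $-4\Delta u+Vu=u\log|u|$. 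Elliptic regularity makes $u$ smooth and, as $u\ge0$, the strong maximum principle gives $u>0$; thus $\log|u|=\log u$ and $d=\tfrac12\int_M u^2$, as claimed.
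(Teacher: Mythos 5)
The paper offers no actual proof of this theorem: it merely declares the argument ``standard,'' pointing to Lions \cite{L} and Theorem 4.3 of \cite{S} with $\rho(u)=|\nabla u|^2+u^2$, and omits all details. Your proposal implements precisely that concentration-compactness scheme, so the route coincides with the one the paper prescribes; what you add is the substance the paper skips, and the crucial piece of it is genuinely not ``standard.'' The model arguments of Lions and Struwe exclude dichotomy using homogeneity of the nonlinearity (strict subadditivity of the ground-state level in the mass parameter), and $u^2\log u$ is not homogeneous. Your scale-invariant fibering functional $\Phi(v)=\tfrac12\|v\|_2^2\exp\bigl(2N(v)/\|v\|_2^2\bigr)$, with $d=\inf_{v\ne0}\Phi(v)$ and the analogous description of $d_\infty$, is exactly the right substitute: dichotomy reduces to $0\ge A\log(2d/A)+B\log(2d_\infty/B)>A\log(2d/A)+B\log(2d/B)\ge0$ (the last inequality because $A+B\le 2d$ makes both terms nonnegative), and the same fibering identities $I'(u)[u]=2N(u)$, $N'(u)[u]=2N(u)-\|u\|_2^2$ later kill the Lagrange multiplier. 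This is correct, and it is where the real work hidden behind the paper's one-line proof lies; the $H^1$ bound via Jensen/Sobolev and the vanishing case via the Lions-type lemma of Section 2 together with the splitting of $u^2\log|u|$ at $|u|=1$ are also right.

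Three steps need (routine) repair. (i) In dichotomy you assert $A,B>0$, but the trichotomy is applied to $\rho_j$, so a priori one piece could carry positive $\rho$-mass and vanishing $L^2$-mass. Your own functional rescues this: if $\|u_j^1\|_2^2\to0$ while $u_j^1$ keeps $\rho$-mass at least $\alpha>0$, interpolation makes the positive part of its entropy vanish, so $N(u_j^1)\ge\alpha/2$ eventually, hence $N(u_j^2)\le-\alpha/2+o(1)$ and $\Phi(u_j^2)\le d\,e^{-\alpha/(2d)}+o(1)<d$, contradicting $\Phi\ge d$ everywhere. (ii) The entropy is additive over disjoint supports, but $u_j^1+u_j^2\ne u_j$: there is a neck, and what you need (and all you can get) is the one-sided bound $N(u_j^1)+N(u_j^2)\le N(u_j)+C\epsilon+o(1)$ --- the positive part of the neck entropy is small by $L^{2+\delta}$-interpolation against the small neck $\rho$-mass, while the negative part has the favorable sign; this one-sided bound suffices for $Ax+By\le0$. (iii) In the compactness case, ``the entropy term then converges'' is not available: on $\{|u_j|\le1\}$ the integrand $u_j^2|\log u_j|$ is dominated by no $L^q$ norm with $q>2$ on an infinite-volume manifold. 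But you do not need convergence: Fatou applied to $-u_j^2\log|u_j|\ge0$ on that set, plus weak lower semicontinuity of the gradient and $V$ terms and strong $L^2(M)$ convergence (tightness, using $V\ge1$, plus Rellich on balls), give $N(u)\le\liminf N(u_j)=0$ and $\|u\|_2^2=2d$, hence $\Phi(u)\le d$; since $\Phi\ge d$ always, this forces $N(u)=0$ and $I(u)=d$, and equality propagates back to give strong $H^1$ convergence. With these patches your argument is complete; the one remaining delicacy, which the paper also ignores, is that $u\log u$ is not Lipschitz at $u=0$, so the final positivity $u>0$ requires the strong maximum principle in Vazquez's form, whose integral condition is indeed satisfied by $s\mapsto s\log(1/s)$.
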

We remark that in our case the $L^2$-Sobolev inequality in $(M,g)$ is true \cite{He}.

The argument is similar to that of P.L.Lions \cite{L} p.115 ff. and Theorem 4.3 in \cite{S}. Here we choose $\rho(u)=|\nabla u|^2+u^2$. Since the proof is standard, we omit the detail of the proof.

\end{document}